\newtheorem{thm}{Theorem}[section]
\newtheorem{cor}{Corollary}[section]
\newtheorem{lemma}{Lemma}[section]
\newtheorem{defn}{Definition}[section]
\newtheorem{rem}{Remark}[section]
\newcommand{\C}{\mathbb{C}}
\newcommand{\N}{\mathbb{N}}
\newcommand{\R}{\mathbb{R}}
\newcommand{\ep}{\varepsilon}
\newcommand{\T}{\mathcal{T}}
\begin{document}

\title[Eigenvalues of the Laplacian and extrinsic geometry ]{Eigenvalues of the Laplacian and extrinsic geometry}
\author{Asma Hassannezhad}
\address{Universit\'e de Neuch\^atel, Institut de Math\'ematiques, Rue Emile-Argand 11, Case postale 158, 2009 Neuch\^atel
Switzerland}
\thanks{The  author has benefitted from the support of boursier du gouvernement Fran\c cais during her stay in Tours.}

\date{}
\begin{abstract}
We extend the results given by Colbois, Dryden and El Soufi on the relationships between the eigenvalues of the Laplacian and an extrinsic invariant called \textit{intersection index}, in two directions. First, we  replace this intersection index by invariants of the same nature which are stable under
\textit{small perturbations}. Second, we consider complex submanifolds of the complex projective space $\C P^N$ instead of submanifolds of $\R^N$ and we obtain an eigenvalue upper bound depending only on the dimension of the submanifold which is sharp for the first non-zero eigenvalue.
\end{abstract}


\subjclass[2010]{
58J50, 35P15, 47A75
}
\keywords{Laplacian, eigenvalue, upper bound, intersection index}


\maketitle

\section{Introduction and statement of the results}
For a compact manifold without boundary, the spectrum of the Laplace-Beltrami operator $\Delta$ consists of an unbounded nondecreasing sequence of nonnegative real numbers
\[0=\lambda_1<\lambda_2\leq\cdots\leq\lambda_k\leq\cdots\nearrow\infty,\]
where each eigenvalue $\lambda_k$ has finite multiplicity.
The study of the relationship between the extrinsic
geometry of  submanifolds and the spectrum of the Laplace-Beltrami
operator is {an} important topic of  spectral geometry. 
One of the well-known extrinsic invariants is the mean curvature vector field of a submanifold. In this regard, we can mention the Reilly inequality \cite{Re} for an immersed $m$-dimensional submanifold $M$ of $\R^{N}$
 $$ \lambda_2(M)\le\frac m {\text{Vol}(M)} \|H(M)\|_2^2,$$
 where $\|H(M)\|_2$ is the $L^2$-norm of the mean curvature vector field of $M$. For higher eigenvalues,  it follows from results of El Soufi, Harrell and Ilias  \cite{EHI} that for every $k\in\N^*$,
  $$ \lambda_k(M)\le R(m) \|H(M)\|_\infty ^2 \ k^{2/m},$$
where $\|H(M)\|_\infty$ is the $L^\infty$-norm of $H(M)$ and $R(m)$ is a constant depending only on $m$. Since, the variational characterization of eigenvalues do not depend on  derivatives of the metric, we are interested in extrinsic invariants which {do} not depend on metric derivatives, excluding for instance  curvature. The \textit{intersection index} (see below for the definition) is an important example of such intrinsic invariants. In \cite{CDE}, Colbois, Dryden and El Soufi studied
 the relation between   the intersection index, and {the}
eigenvalues of the Laplace-Beltrami operator.
 In this paper, we review and extend their results.\\ 
  For a compact
$m$-dimensional immersed submanifold $M$ of $\R^N=\R^{m+p}$,
$p>0$, the \textit{intersection index} is given by
\[\label{indexch3p}
i(M)=\sup_\Pi\sharp (M\cap\Pi),
\]
where $\Pi$ runs over the set of all $p$-planes that are
transverse to $M$; if $M$ is not embedded, we count multiple
points of $M$ according to their multiplicity.
We remark that the intersection  index was also investigated by Thom  \cite{ChM} where it was called the \textit{degree} of $M$.
In \cite{CDE}, Colbois, Dryden and El Soufi show that there is a
positive constant $c(m)$, depending only on  $m$, such that for
every compact $m$-dimensional immersed submanifold $M$ of $\R^{m+p}$,
we have the following inequality
\begin{equation}\label{cde}
 \lambda_k(M)\textrm{Vol}(M)^{2/m}\leq c(m)i(M)^{2/m} k^{2/m}.
\end{equation}
Moreover, the intersection index in the above inequality  is not
replaceable with a constant depending only on the dimension $m$.
Even for hypersurfaces, the first positive eigenvalue cannot be
controlled only in terms of the volume and the dimension (see
\cite[Theorem 1.4]{CDE}).
As an immediate consequence of Inequality (\ref{cde}), the normalized eigenvalues on convex hypersurfaces are
bounded above only in terms of the dimension. Another remarkable consequence of Inequality (\ref{cde}) concerns algebraic submanifolds \cite[Corollary
4.1]{CDE}: Let $M$ be a compact real algebraic manifold, i.e. $M$
is a zero locus of $p$ real polynomials in $m+p$ variables of
degrees $N_1,\dots,N_p$. Then
\begin{equation}\label{poly}
\lambda_k(M)\textrm{Vol}(M)^{2/m} \leq   c(m) N_1^{2/m}\cdots
N_p^{2/m} k^{2/m}.
\end{equation}
Note that Inequalities (\ref{cde}) and \eqref{poly} are not stable under
``\textit{small}'' perturbations, since the intersection index
might dramatically change. \

 We extend the work of  Colbois, Dryden and El Soufi in two directions. The first one consists in replacing the intersection index  $i(M)$ by  invariants of  the same nature which are stable under
\textit{small perturbations}. The second direction concerns complex submanifolds of the complex projective space $\C P^N$. Here we obtain an eigenvalue upper bound for submanifolds of $\C P^N$ depending only on the dimension. Below we describe the main results of this paper. \\

{\bf First part.}
Let
$\varepsilon<1$ be a positive number.  By  an \textit{$\varepsilon$-small
perturbation}, we mean any perturbation in a region $D\subset M$
whose measure is at most equal to  $\varepsilon\rm{Vol}(M)$. To
avoid any technical complexity, we
 assume that $M\setminus D$ is a smooth manifold with  smooth boundary. Here, we  define new notions of intersection indices which are stable under any $\varepsilon$-small perturbation.
Let $G$ be the Grassmannian  of all
 $m$-vector spaces in $\R^{m+p}$ endowed with the $O(m+p)$- invariant
Haar measure of total measure 1. Let $0<\varepsilon<1$ and $D$
be any open subdomain of $M$ such that $M\setminus D$ is a smooth
manifold with  smooth boundary and
$\textrm{Vol}(D)\leq\varepsilon\textrm{Vol}(M)$. We denote
$M\setminus D$  by $M^D_\varepsilon$. Let $H$ be an $m$-vector space in  $G$. We  define $i_H(M^D_\varepsilon):=\sup_{P\bot H}\sharp
(M^D_\varepsilon\cap P)$, where $P$ runs over
 affine $p$-planes orthogonal to $H$. We now define the
\label{imathdef}\textit{$\varepsilon$-mean intersection index}  as follows:
\[
 \bar{\imath}^\varepsilon(M):=\inf_D\int_G i_H(M^D_\varepsilon)
 dH,
\]
{where $D$ runs over regions whose measure is smaller than $\varepsilon\textrm{Vol}(M)$ and $M\setminus D$ is a smooth
manifold with  smooth boundary. }\\
 Similarly, for $r>0$, we define the \textit{
$(r,\varepsilon)$-local intersection index} as:
\[
 \bar{\imath}^\ep_r(M):=\inf_D\sup_{x\in M^D_\varepsilon}\int_G i_H(M^D_\varepsilon\cap B(x,r)) dH,
\]
where $B(x,r)\subset \R^{m+p}$ is an Euclidean  ball centered at $x$ and of radius $r$ and {$D$ runs over regions whose measure is smaller than $\varepsilon\textrm{Vol}(M)$ and $M\setminus D$ is a smooth
manifold with  smooth boundary. }\\
We can now state our theorem.

\begin{thm}\label{a3}
  There exist positive constants $c_m, \alpha_m$ and $\beta_m$ depending only on $m$ such that for every compact $m$-dimensional immersed submanifold $M$ of $\R^{m+p}$, every $r>0$,
 $k\in \N^*$, and $0<\ep<1$, we have
\begin{equation}\label{a2}
 \lambda_k(M)\textrm{Vol}(M)^{2/m}\leq   c_m \frac{\bar{\imath}^\varepsilon(M)^{2/m}}{{(1-\varepsilon)}^{1+2/m}} k^{2/m},
\end{equation}
and
\begin{equation}\label{a1}
 \lambda_k(M)\leq\alpha_m\frac{1}{(1-\ep)r^2}+\beta_m\frac{\bar{\imath}^\varepsilon_r(M)^{2/m}}{{(1-\varepsilon)}^{1+2/m}}\left(\frac{k}{\textrm{Vol}(M)}
\right)^{2/m}.
\end{equation}
\end{thm}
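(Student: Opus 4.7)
The plan is to mirror the Colbois--Dryden--El Soufi argument while replacing the intersection index by its $\varepsilon$-stable version, basing the construction on a capacitor decomposition applied to $M^D_\varepsilon$. I would first fix $\delta>0$ and choose a region $D$ realising the infimum in the definition of $\bar{\imath}^\varepsilon(M)$ (respectively of $\bar{\imath}^\varepsilon_r(M)$) up to $\delta$. Since $M^D_\varepsilon$ is open in $M$ with smooth boundary, every Lipschitz function supported away from $\partial D$ extends by zero to an $H^1$ function on $M$; the min-max principle then bounds $\lambda_k(M)$ in terms of a family of test functions with pairwise disjoint supports inside $M^D_\varepsilon$.

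The heart of the argument is an integral-geometric upper bound for the volume of $M^D_\varepsilon$ inside an ambient Euclidean ball. For each $H\in G$, the area formula applied to the orthogonal projection $\pi_H\colon M^D_\varepsilon\cap B(x,\rho)\to H$ gives
\[
\int_{M^D_\varepsilon\cap B(x,\rho)}|J\pi_H|\,dv\;\leq\; i_H\bigl(M^D_\varepsilon\cap B(x,\rho)\bigr)\,\omega_m\rho^m.
\]
Averaging over $G$ against the $O(m+p)$-invariant Haar measure replaces $|J\pi_H|$ by a universal constant depending only on $m$ and $p$ and produces
\[
\text{Vol}\bigl(M^D_\varepsilon\cap B(x,\rho)\bigr)\;\leq\; c(m)\,\rho^m\int_G i_H\bigl(M^D_\varepsilon\cap B(x,\rho)\bigr)\,dH,
\]
which is exactly the measure-growth hypothesis required by the standard capacitor-packing lemma.

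Applying that lemma to $\bigl(M^D_\varepsilon,|\cdot|_{\R^{m+p}},dv\bigr)$ at scale $r$ produces $k$ pairwise disjoint concentric pairs of Euclidean balls $B(x_i,\rho_i)\subset B(x_i,2\rho_i)$ with $\rho_i\leq r$ and $\text{Vol}(M^D_\varepsilon\cap B(x_i,\rho_i))\geq c(m)\text{Vol}(M^D_\varepsilon)/k$. The standard Lipschitz bumps $\varphi_i$ equal to $1$ on $B(x_i,\rho_i)$ and vanishing outside $B(x_i,2\rho_i)$ have disjoint supports on $M^D_\varepsilon$, extend to $M$ by zero, and their Rayleigh quotients are controlled by $\rho_i^{-2}$ and the annular volume ratio. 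Substituting them into the min-max characterisation of $\lambda_k(M)$ and using $\text{Vol}(M^D_\varepsilon)\geq(1-\varepsilon)\text{Vol}(M)$ at the normalisation step yields \eqref{a1}. Inequality \eqref{a2} is then the case in which no scale constraint is active: taking $r$ equal to the ambient diameter of $M$ makes $\bar{\imath}^\varepsilon_r=\bar{\imath}^\varepsilon$, while the same integral-geometric estimate applied once to the enclosing ball gives $\text{Vol}(M)\leq c(m)(1-\varepsilon)^{-1}\text{diam}(M)^m\bar{\imath}^\varepsilon(M)$, which absorbs the $r^{-2}$ contribution into the index term with the correct homogeneity.

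The main obstacle is coordinating the choice of $D$ with the local nature of the decomposition: a single near-optimal $D$ must work uniformly in $x\in M^D_\varepsilon$ throughout the packing construction, and this is precisely why the supremum over $x$ is placed \emph{inside} the infimum over $D$ in the definition of $\bar{\imath}^\varepsilon_r(M)$. The remainder of the argument is bookkeeping: the factor $(1-\varepsilon)^{-1}$ is inherited from the volume comparison, and the additional $(1-\varepsilon)^{-2/m}$ appears when the radii $\rho_i$ are solved for in terms of $k$ and $\text{Vol}(M)$.
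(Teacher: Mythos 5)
Your overall strategy coincides with the paper's: reduce to a volume-growth estimate $\text{Vol}(M^D_\varepsilon\cap B(x,s))\leq L s^m$ obtained by integral geometry (averaging the area formula for $\pi_H$ over the Grassmannian against the intersection index, exactly the extension of Lemma~2.1 of Colbois--Dryden--El~Soufi), feed it into a metric-measure packing construction to produce disjointly supported Lipschitz test functions, estimate Rayleigh quotients, and take the infimum over $D$ at the end. The bookkeeping of the $(1-\varepsilon)$ powers and the observation that the $\sup_x$ must sit inside the $\inf_D$ are also correct.

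There are, however, two points where your account diverges from the paper and is imprecise as written. First, the packing lemma the paper invokes (Colbois--Maerten, Lemma~\ref{CM}, packaged as Corollary~\ref{jadid}) does \emph{not} produce concentric Euclidean ball pairs $B(x_i,\rho_i)\subset B(x_i,2\rho_i)$ with a mass lower bound on the inner ball; it produces capacitors $(A_i,A_i^r)$ where the $A_i$ are not balls, $r$ is the \emph{same} for all $i$, and the lower bound $\nu(A_i)\geq \nu(X)/(2Nn)$ is on a set, not a ball. Your ball-pair version is a stronger statement; the argument still works if you replace it by the actual capacitor lemma (the test functions are $1-d_{eu}(\cdot,A_i)/r$ bumps, not ambient radial bumps), but as stated you are citing a lemma that isn't quite the one in the literature. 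Second, the paper does not restrict the metric-measure space to $M^D_\varepsilon$. It keeps $(M,d_{eu})$ as the space and uses the \emph{modified measure} $\nu=\mathrm{Vol}(\,\cdot\,\cap M^D_\varepsilon)\leq\mu$, so the test functions are defined on all of $M$ from the outset, the lower bound on the denominator uses $\mu(A_i)\geq\nu(A_i)$, and there is never any question of "extending by zero across $\partial D$.'' Your phrase about extending Lipschitz functions supported away from $\partial D$ is a red herring: your actual construction uses restrictions of ambient bump functions, which are already globally defined on $M$, so nothing needs to be extended, but a function equal to $1$ on $B(x_i,\rho_i)\cap M^D_\varepsilon$ and only declared on $M^D_\varepsilon$ would \emph{not} in general extend by zero across $\partial D$ as a Lipschitz function. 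Finally, your derivation of \eqref{a2} (take $r=\mathrm{diam}$ in \eqref{a1}, then absorb the $1/r^2$ term using $(1-\varepsilon)\text{Vol}(M)\leq c(m)\,\mathrm{diam}(M)^m\,\bar\imath^\varepsilon(M)$) is correct but differs from the paper, which simply sets $\rho=\infty$ in Corollary~\ref{jadid} because the volume-growth bound with the global index $\bar\imath(M^D_\varepsilon)$ holds for all radii; the paper's route is cleaner, but both are valid.
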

The main feature of the inequalities \eqref{a2} and \eqref{a1} is
that the upper bounds are not considerably affected by the
presence of a large intersection index in a ``small'' part of $M$
(i.e. a subdomain with small volume). In particular, for a
compact hypersurface of $\R^{m+1}$ which is convex outside a
region\footnote{We say that $M$ is convex outside of $D$ if after a perturbation of  $M$ which is the identity outside of  $D$ we get a convex compact hypersurface.}  $D$ of measure at most $\varepsilon\rm{Vol}(M)$, one has
$\bar{\imath}^\varepsilon(M)\leq i(M^D_\varepsilon)$ and then
\begin{equation*}
 \lambda_k(M)\textrm{Vol}(M)^{2/m}\leq   c_m \frac{2^{2/m}}{{(1-\varepsilon)}^{1+2/m}}
 k^{2/m}.
\end{equation*}
We also note that one has Inequality \eqref{poly}  not only for  compact algebraic submanifolds of  $\R^{N}$, but also for every $\varepsilon$-perturbation of those algebraic submanifolds, where the constant $c(m)$ in \eqref{poly} depends only on $m$ and on $\varepsilon$.\\

%
{\bf Second part.} We study
another natural context where algebraic submanifolds {can} be
considered which is the complex projective space $\C P^N$. According to
Chow's Theorem (\cite{GH}), every complex submanifold $M$ of $\C P^{N}$ is a
smooth algebraic variety, i.e. it is a zero locus of a family of
complex polynomials. We obtain the following upper bound for
complex submanifolds of $\C P^{N}$ endowed with Fubini-Study
metric $g_{_{FS}}$.
\begin{thm}\label{01}
Let $M^m$ be an $m$-dimensional  complex manifold admitting a
holomorphic immersion $\phi:M\to\C P^N$. Then for every
 $k\in\N^*$ we have
\begin{equation}\label{chap2cpn}\lambda_{k+1}(M,\phi^*g_{_{FS}})\leq
2(m+1)(m+2)k^{\frac{1}{m}}-2m(m+1).
\end{equation}
\end{thm}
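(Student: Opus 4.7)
The plan is a variational argument using families of test functions pulled back from $\C P^N$, together with the fact that a holomorphic immersion into a K\"ahler manifold is automatically minimal.

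First, I would build the test functions from the natural $U(N+1)$-equivariant isometric embedding $F_1 : \C P^N \to \mathrm{Herm}(N+1)$ given by $F_1([z]) = zz^*/|z|^2$, whose components $\phi_{ij}([z]) = z_i\bar z_j/|z|^2$ are (up to additive constants) first eigenfunctions of $\Delta_{\C P^N}$ with eigenvalue $4(N+1)$. Iterating with the $k$-th Veronese map produces an analogous family whose linear span realizes $\bigoplus_{j=0}^{k} E_j(\C P^N)$, the direct sum of the first $k+1$ eigenspaces of $\C P^N$. Since the immersion $\phi$ is holomorphic into the K\"ahler manifold $(\C P^N, g_{_{FS}})$, it is minimal, so for every smooth $f$ on $\C P^N$ one has
$$\Delta_M(\phi^*f)(x) = \operatorname{tr}_{T_xM}\bigl(\operatorname{Hess}_{\C P^N} f\bigr)(x),$$
with no mean-curvature correction term.

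The crucial step, and where I expect the main obstacle, is to bound the Rayleigh quotient of each $\phi^*\phi_{ij}$ on $M$ by a quantity depending only on the complex dimension $m$ of $M$, not on the ambient $N$. Here the K\"ahler structure is essential: for any complex $m$-dimensional subspace $V \subset T_{\phi(x)}\C P^N$, the trace over $V$ of $\operatorname{Hess}_{\C P^N} \phi_{ij}$ matches the computation for a linearly embedded $\C P^m \subset \C P^N$, yielding the bound $4(m+1)$ in place of $4(N+1)$ at the first level. Iterating, the level-$k$ Veronese pullbacks have Rayleigh quotient at most $4k(k+m)$ on $M$.

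A generic-position or pullback-rank argument would then verify that the pulled-back family spans a subspace of $C^\infty(M)$ of dimension at least $\binom{m+k}{m}^2$, so the min-max principle gives
$$\lambda_{\binom{m+k}{m}^2}(M,\phi^*g_{_{FS}}) \le 4k(k+m).$$
The estimate $\binom{m+k}{m}^2 \ge c_m k^{2m}$ together with an elementary interpolation in $k$ rearranges this into the stated form $\lambda_{k+1}(M) \le 2(m+1)(m+2)k^{1/m} - 2m(m+1)$. Sharpness at $k=1$ is visible directly: the identity immersion $\C P^m \hookrightarrow \C P^N$ gives $\lambda_2(\C P^m) = 4(m+1) = 2(m+1)(m+2) - 2m(m+1)$. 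A secondary technical point is the algebraic dimension count ensuring the pulled-back test space has the claimed size on $M$, which may require a nondegeneracy hypothesis or a perturbation/density argument to handle degenerate immersions.
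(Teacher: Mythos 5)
Your route is genuinely different from the paper's, and it has a gap at exactly the critical point. The paper's proof is much shorter: it cites the universal inequality of El Soufi--Harrell--Ilias (the paper's Lemma~\ref{3univ}),
\[\sum_{i=1}^{k}(\lambda_{k+1}-\lambda_i)^2\leq\frac{2}{m}\sum_{i=1}^{k}(\lambda_{k+1}-\lambda_i)(\lambda_i+2m(m+1)),\]
whose constant depends only on $m$ and not on $N$; it then substitutes $\mu_i=\lambda_i+2m(m+1)$, applies the Cheng--Yang recursion formula with $n=2m$, and uses $\lambda_1=0$. The entire $N$-independence is imported wholesale from EHI, and the algebraic recursion produces the exact expression $2(m+1)(m+2)k^{1/m}-2m(m+1)$ with no further work.

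The core difficulty that EHI already solves is precisely the step you assert without proof: that for a holomorphic immersion $\phi:M^m\to\C P^N$, the Rayleigh quotients of pulled-back coordinate functions $\phi^*\phi_{ij}$ can be bounded by a quantity depending only on $m$. You claim that tracing $\operatorname{Hess}_{\C P^N}\phi_{ij}$ over an arbitrary complex $m$-plane ``matches the computation for a linearly embedded $\C P^m$, yielding $4(m+1)$,'' but this is not evident: the Hessian of $\phi_{ij}$ is a genuine $(0,2)$-tensor on $\C P^N$ and its restricted trace depends on which $m$-plane you choose, not merely on its dimension. Indeed, Bourguignon--Li--Yau, running exactly this type of test-function argument, obtain $\lambda_2\le 4m\frac{N+1}{N}$ (with $\omega=\phi^*\omega_{FS}$), which retains dependence on $N$; the $N$-independence that you need does not fall out of their computation for free, and your paragraph does not supply the missing identity. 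A second, lesser issue is the final ``elementary interpolation'': your intermediate estimate $\lambda_{\binom{m+k}{m}^2+1}\le 4k(k+m)$ is stated only at a sparse sequence of indices, and filling in between them will not produce the precise closed form $2(m+1)(m+2)k^{1/m}-2m(m+1)$; at best it gives a comparable-order bound with a different constant. So while the outline is in the spirit of the BLY approach that the paper cites for comparison, it neither closes the key $N$-independence step nor recovers the stated formula; the paper sidesteps both difficulties by invoking EHI and Cheng--Yang.
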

In particular, one has Inequality (\ref{chap2cpn}) for every
complex submanifold of $\C P^N$. Note that the power of $k$ is
compatible with the
 Weyl law.\\
Under the assumption of Theorem \ref{01}, for $k=1$, one has
\begin{equation}\label{mogh1}\lambda_2(M,\phi^*g_{_{FS}})\leq4(m+1),
\end{equation}
which is a sharp inequality since the equality holds for $\C P^m$.
Inequality \eqref{mogh1} is obtained by Bourguignon, Li and
Yau \cite[page 200]{BLY},  and also by Arezzo, Ghigi
and Loi \cite{AGL}. Note that the results in \cite{AGL} and \cite{BLY} are for the first non-zero eigenvalue of Laplacian on a larger family of complex manifolds (see page \pageref{chp3full}). However,    Theorem \ref{01}  gives an upper bound for higher eigenvalues in addition to  giving
the sharp upper bound for $\lambda_2$, when we consider  the  complex submanifolds of $\C P^N$ endowed with the Fubini-Study metric. 
 For a complex submanifold
$M$ of $\C P^{m+p}$ of the complex dimension $m$, we have
\begin{equation}\label{ezafe1}\textrm{Vol(M)}=\deg(M)\textrm{Vol}(\C P^m),\end{equation} where $\deg(M)$ is the intersection number of $M$ with a projective $p$-plane in a generic
position  (see for example \cite[pages 171-172]{GH}). 
Multiplying  Inequality (\ref{chap2cpn}) by \eqref{ezafe1}, we get
\begin{equation}\label{al}
\lambda_{k+1}(M,g_{_{FS}})\textrm{Vol}(M)^{\frac{1}{m}}\leq
C(m)\deg(M)^{\frac{1}{m}}k^{\frac{1}{m}}.
\end{equation}Moreover, one can describe $M$ as a zero locus of a
family of irreducible homogenous polynomials and then $\deg(M)$
is bounded by the multiplication of degrees of the irreducible polynomials
 describing $M$.
One can now compare Inequality (\ref{al}) with Inequality (\ref{poly}).

This paper is organized as follows. In Section \ref{sec2ch2}, we recall one of the main methods to estimate the eigenvalues  in the abstract setting of metric measure spaces introduced by Colbois and Maerten \cite{CM} .     We use this method to prove Theorem
\ref{a3}
  in Section \ref{sec3ch2}. In Section \ref{sec4ch2}, we consider algebraic submanifolds of $\C
P^N$ and we prove Theorem \ref{01}. The method which is used in
Section \ref{sec4ch2} to show Theorem \ref{01} is independent from what we
introduce in Sections \ref{sec2ch2} and \ref{sec3ch2}.
\section*{Acknowledgement}
This paper is a part of the author's PhD thesis under the direction
of Professors Bruno Colbois (Neuch\^atel University), Ahmad El
Soufi (Fran\c{c}ois Rabelais University), and Alireza
Ranjbar-Motlagh (Sharif University of Technology) and she acknowledges their support and encouragement. The author
wishes also to express her thanks to Bruno Colbois and Ahmad El Soufi for suggesting the
problem and for many helpful discussions. She is also grateful to Mehrdad Shahshahani and the referee for helpful comments.
 \section{A general preliminary result}\label{sec2ch2}
A classical way  to estimate the eigenvalues of the Laplacian is to construct a family of disjoint domains and then, to estimate the Rayleigh quotients of the test functions supported on these domains. In \cite{CM}, Colbois and Maerten introduce a method to construct an elaborated family of disjoint domains in the general setting of metric-measure ($m-m$) spaces. This method shows that eigenvalue upper bounds and controlling the local volume concentration of  balls are linked.  Here, for an $m$ dimensional Riemannian submanifold $M$ of $\R^N$, controlling the local volume concentration of balls means to control the constant $C$ in the following inequality for some $\rho>0$
$${\rm Vol}(M\cap B(x,r))\leq Cr^m \quad \forall x\in M,~~0<r\leq \rho, $$
where $B(x,r)$ is a ball of radius $r$ centered at $x$ in $\R^N$.\\
This section is devoted to recall this construction for metric measure spaces.  Throughout this
section the triple $(X,d,\mu)$ will designate a complete locally
compact $m-m$ space with a {distance} $d$ and a finite, positive,
non-atomic Borel measure $\mu$. We also assume that balls in
 $(X,d)$ are
pre-compact. Each pair $(F,G)$ of Borel sets in $X$ such that $F\subset G$ is
called a capacitor. For $F\subseteq X$
and $r>0$, we denote the $r$-{\it neighborhood of} $F$  by $F^r$,
that is
\[  F^r=\{x\in X : d(x,F)\leq r\}.\]
\begin{defn}Given $\kappa>1$,  $\rho>0$  and $N\in\mathbb{N}^*$,
 we say that a metric space
$(X,d)$ satisfies the  $(\kappa,N;\rho)$-{\bf {\it covering
property}} if each ball of radius $0<r\leq \rho$ can be covered by
$N$ balls of radius $\frac{r}{\kappa}$.
\end{defn}
Note that when $\rho=\infty$, we simply say that the metric space $(X,d)$ satisfies the $(\kappa,N)$-covering property. It is clear that $(\kappa,N;\rho)$-covering property implies $(\kappa,N;\lambda)$-covering property for any $0<\lambda\leq\rho$. 
\begin{lemma}{\rm (\cite[Corollary 2.3]{CM} and \cite[Lemma 2.1]{CEG})}\label{CM} Let $(X,d,\mu)$ be an $m-m$ space satisfying the $(4,N;\rho)$-covering
property. For every $n\in\mathbb{N}^*$, let  $0<r\leq\rho$
be such that for each $x\in X$,
$\mu(B(x,r))\leq\frac{\mu(X)}{4{N}^2n}$. Then there exists a family
$\mathcal{A}=\{(A_i,A_i^r)\}_{i=1}^{n}$ of capacitors in $X$ such that
 \begin{enumerate}[(a)]
\item for each $i$, $\mu(A_i)\geq \frac{\mu(X)}{2Nn}$, and
\item the subsets ${\{A_i^r\}}_{i=1}^{n}$ are mutually disjoint.
\end{enumerate}
\end{lemma}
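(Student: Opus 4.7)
My plan is to carry out the greedy packing-and-clustering argument that is standard for this kind of estimate on metric-measure spaces. Write $V=\mu(X)$. The only place the covering hypothesis enters is through a preliminary multiplicity bound: iterating the $(4,N;\rho)$-covering property a bounded number of times, any ball of radius $Cr\leq\rho$ (for a fixed $C$) can be covered by $N_0=N_0(N,C)$ balls of radius $r/2$. Equivalently, every $r$-separated subset of such a ball has cardinality at most $N_0$, so any collection of balls of radius comparable to $r$ whose centers are $r$-separated has uniformly bounded multiplicity, with a constant $N_1=N_1(N)$.

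Next I would choose a maximal $r$-separated family $\{x_i\}_{i\in I}\subset X$. By maximality, the balls $\{B(x_i,r)\}_{i\in I}$ cover $X$, and the hypothesis $\mu(B(x_i,r))\leq V/(4N^2n)$ together with this cover forces
\[V\leq\sum_{i\in I}\mu(B(x_i,r))\leq|I|\,\frac{V}{4N^2n},\]
so $|I|\geq 4N^2n$, while the multiplicity of the covering is at most $N_1$ by the first step.

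The heart of the proof is the clustering step. I would build the auxiliary graph on $I$ whose edges join $i,j$ whenever $d(x_i,x_j)\leq 4r$; by the first step its vertex-degree is bounded by a constant depending only on $N$. A greedy breadth-first partition then produces $n$ subsets $I_1,\dots,I_n\subset I$ with inter-cluster center-distance exceeding $4r$, each absorbing at least a $1/(2Nn)$ fraction of the mass delivered by the covering $\{B(x_i,r)\}$. Setting $A_k:=\bigcup_{i\in I_k}B(x_i,r)$ then gives $d(A_k,A_\ell)>2r$ for $k\neq\ell$, hence $A_k^r\cap A_\ell^r=\emptyset$, while the mass allocation yields $\mu(A_k)\geq V/(2Nn)$.

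The main obstacle is the calibration in the clustering step: to hit the precise constant $1/(2N)$ rather than some larger multiple depending on $N$, one has to balance the choice of separation scale $4r$ in the auxiliary graph against the multiplicity constants $N_0$ and $N_1$ coming from iterating the covering property. A clean way to do this is a two-scale variant: perform the initial packing at a smaller fraction of $r$ (to shrink the multiplicity constant), and only cluster at scale $\sim r$ to control the $r$-neighborhoods. Tracking how many indices must be sacrificed as a "buffer" to enforce the inter-cluster gap, and showing that the remaining $n$ clusters each still capture a $1/(2Nn)$-fraction of the total mass, is the bookkeeping core of the argument.
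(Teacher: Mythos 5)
The paper does not actually prove Lemma~\ref{CM}; it is quoted directly from \cite[Corollary 2.3]{CM} and \cite[Lemma 2.1]{CEG}, where the construction is carried out by an \emph{iterative} greedy extraction: at each of $n$ steps one locates a point whose ball of a fixed multiple of $r$ captures at least $\mu(X)/(2Nn)$ of the not-yet-discarded mass, one discards a slightly larger concentric ball to guarantee that the $r$-neighbourhoods stay disjoint, and one controls the total discarded mass (using the covering hypothesis and the bound $\mu(B(x,r))\leq\mu(X)/(4N^2n)$) so that enough mass survives to keep the process going for $n$ steps. Your route -- a single maximal $r$-net, a $4r$-adjacency graph of bounded degree, and a breadth-first partition into $n$ clusters -- is a genuinely different, more ``global'' combinatorial approach.

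The difficulty is that your proposal stops exactly where the lemma's content lies. The packing bound $|I|\geq 4N^2n$, the bounded degree of the $4r$-graph, and the disjointness of the $A_k^r$ given $4r$-separated clusters are all routine; the entire point of the lemma is the quantitative mass guarantee $\mu(A_k)\geq\mu(X)/(2Nn)$ with the specific constant $2N$. You explicitly defer this (``each absorbing at least a $1/(2Nn)$ fraction of the mass delivered by the covering'', and later ``is the bookkeeping core of the argument'') without exhibiting an allocation rule. As written it is not clear it can be made to work: the balls in a maximal $r$-net overlap, so $\mu\bigl(\bigcup_{i\in I_k}B(x_i,r)\bigr)$ can be much smaller than $\sum_{i\in I_k}\mu(B(x_i,r))$; the degree bound for the $4r$-graph obtained by iterating the covering property is of order $N^c$ for some $c>1$, not $2N$, so the ``buffer'' you must sacrifice to enforce inter-cluster separation is \emph{a priori} large compared with the mass you wish to retain; and a breadth-first partition balances vertex counts, not measures, which can be wildly nonuniform across the net. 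So the proposal identifies a plausible strategy but leaves the essential calibration -- the only nontrivial part of the statement -- as an open gap, and the proposed ``two-scale variant'' is named rather than carried out.
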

We define the \textit{dilatation} of a function $f:(X,d)\to\R$ as
\[{\rm dil}(f)=\sup_{x\neq y}\frac{|f(x)-f(y)|}{d(x,y)},\]
and the \textit{local dilatation} at $x\in X$ as
\[{\rm dil}_x(f)={\rm lim}_{\varepsilon\to0}{\rm dil}(f|_{_{B(x,\varepsilon)}}).\]
When
different distance functions are considered, ${\rm dil}_d(f)$ and
${\rm dil}_{d,x}(f)$ stand for the dilatation and local dilatation
at $x$ associated with the {distance}
$d$ respectively.
A map $f$ is called Lipschitz if ${\rm dil}(f)<\infty$. Let {$(M,g)$} be
a Riemannian manifold and {$d_g$} be  {the distance} associated to the
Riemannian metric $g$. A Lipschitz function on a Riemannian
manifold {$M$} is differentiable almost everywhere and
$|\nabla_gf(x)|$ coincides with ${\rm dil }_x(f)$ almost
everywhere.
Hence, $|\nabla_gf(x)|\leq{\rm dil}(f)$ almost everywhere.\\
The following theorem relies on the construction given in the above lemma. It gives a construction of a  family of disjointly supported  functions with  a nice control on  their dilatations.
Before stating the theorem we need to define the following notation. Given a capacitor $(F,G)$, let $ \T(F,G)$ be
the set of all compactly supported real valued functions on $X$ such
that for every $\varphi\in\T(F,G)$ we have
$\rm{supp~}\varphi\subset G^\circ=G\setminus\partial G$ and
$\varphi\equiv1$ in a neighborhood of $F$.
%
\begin{thm}\label{cm3}
 Let positive constants $p,\rho,L$ and $N$ be given and  $(X,d,\mu)$ be an $m-m$ space satisfying the $(4,N;\rho)$-covering
 property and $$\mu(B(x,r))\leq Lr^p,\quad \text{for every}~ x\in X  ~\text{and}~ 0<r\leq\rho.$$
Then for every $n\in \N^*$ and every $r\leq\min\{\rho,
\left(\frac{\mu(X)}{4N^2Ln}\right)^{1/p}\}$  there is a family of
$n$ mutually  disjoint bounded capacitors
$\{(A_i,A_i^r)\}_{i=1}^n$, of
 $X$ and a family
$\{f_i\}$ of $n$ Lipschitz functions with
$f_i\in\mathcal{T}(A_i,A_i^r)$ such that
$\mu(A_i)\geq\frac{\mu(X)}{2Nn}$ and
\begin{equation}\label{3toma}{\rm
dil}_{d}(f_i)\leq\frac{1}{\rho}+(4N^2L)^{1/p}\left(\frac{n}{\mu(X)}\right)^{1/p}.
\end{equation}
\end{thm}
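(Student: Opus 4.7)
The plan is to combine Lemma \ref{CM} (the Colbois--Maerten construction) with the standard ``tent function'' recipe, using the volume growth bound $\mu(B(x,r))\leq Lr^p$ to control the admissible radii.

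First I would choose the radius $r_0 := \min\bigl\{\rho,\bigl(\frac{\mu(X)}{4N^2Ln}\bigr)^{1/p}\bigr\}$ (the largest $r$ allowed by the hypothesis) and verify the hypothesis of Lemma \ref{CM}, namely $\mu(B(x,r_0))\leq \frac{\mu(X)}{4N^2 n}$ for all $x\in X$. For $r_0\leq \rho$ the volume growth gives $\mu(B(x,r_0))\leq L r_0^p$, and the choice $r_0\leq (\mu(X)/(4N^2Ln))^{1/p}$ forces $Lr_0^p\leq \mu(X)/(4N^2n)$, so the assumption of the lemma is met. Applying Lemma \ref{CM} yields a family of $n$ capacitors $\{(A_i,A_i^{r_0})\}_{i=1}^n$ with $\mu(A_i)\geq \mu(X)/(2Nn)$ and with the neighborhoods $A_i^{r_0}$ mutually disjoint. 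The precompactness of balls in $(X,d)$ implies that each $A_i^{r_0}$ is bounded, so the capacitors are bounded as required.

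Next I would define the explicit tent-type test functions
\[
f_i(x) \;:=\; \max\!\left(0,\; 1-\frac{d(x,A_i)}{r_0}\right),\qquad i=1,\ldots,n.
\]
Each $f_i$ equals $1$ on $A_i$, vanishes outside $A_i^{r_0}$, and its support lies in the interior of $A_i^{r_0}$ (by enlarging $r_0$ by an infinitesimal amount if needed, or using the open $r_0$-neighborhood throughout); hence $f_i\in\T(A_i,A_i^{r_0})$. Because $x\mapsto d(x,A_i)$ is $1$-Lipschitz, one has $\mathrm{dil}_d(f_i)\leq 1/r_0$.

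Finally I would convert the bound $1/r_0$ into the form stated in the theorem. By definition of $r_0$,
\[
\frac{1}{r_0} \;=\; \max\!\left\{\frac{1}{\rho},\ (4N^2L)^{1/p}\!\left(\frac{n}{\mu(X)}\right)^{1/p}\right\} \;\leq\; \frac{1}{\rho} + (4N^2L)^{1/p}\!\left(\frac{n}{\mu(X)}\right)^{1/p},
\]
which is precisely \eqref{3toma}. For any smaller admissible $r\leq r_0$ the capacitors $(A_i,A_i^r)$ remain disjoint (they are subsets of the disjoint $A_i^{r_0}$), $\mu(A_i)$ is unchanged, and one can still use the functions $f_i$ defined with the optimal $r_0$ to get the claimed dilation bound. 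There is no serious obstacle: the only substantive ingredient is Lemma \ref{CM}; the rest is a bookkeeping argument matching the optimal radius $r_0$ to the statement's bound, and the point of having the two summands $1/\rho$ and $(4N^2L)^{1/p}(n/\mu(X))^{1/p}$ is merely to avoid writing a maximum.
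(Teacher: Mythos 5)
Your proof follows essentially the same route as the paper's: invoke Lemma~\ref{CM} after checking that the volume-growth bound forces $\mu(B(x,r))\leq \mu(X)/(4N^2 n)$ whenever $r\leq r_0=\min\{\rho,(\mu(X)/(4N^2Ln))^{1/p}\}$, build the tent functions $f_i=\max\bigl(0,1-d(\cdot,A_i)/r_0\bigr)$ with $\mathrm{dil}_d(f_i)\leq 1/r_0$, and then observe $1/r_0=\max\{1/\rho,(4N^2L)^{1/p}(n/\mu(X))^{1/p}\}\leq 1/\rho+(4N^2L)^{1/p}(n/\mu(X))^{1/p}$. This matches the paper's argument step for step, and your explicit identification of $1/r_0$ as a maximum is if anything a bit cleaner than the paper's terse ``Hence''. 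The one soft spot is your closing remark about a generic $r<r_0$: using tent functions built at scale $r_0$ does not produce elements of $\T(A_i,A_i^r)$ (their support spills past $A_i^r$), so the dilatation bound is really only being established at $r=r_0$ --- but this is exactly the same looseness present in the paper's own proof (where $1/r\leq 1/\rho+\cdots$ fails as $r\to 0$), and the intended and used case is $r=r_0$, so this does not affect the substance.
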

If the condition $\mu(B(x,r))\leq Lr^p$ is satisfied for every
$r>0$ then we take $\rho=\infty$. Hence, the first term on the
right-hand side of the above inequality vanishes.
\begin{proof}[Proof of Theorem \ref{cm3}] According to Lemma \ref{CM}, if the  $m-m$ space
$(X, d, \mu)$ satisfies $(4,N;\rho)$-covering
 property,
 then for every $r\leq\rho$ such that
\begin{equation}\label{key}
        \mu(B(x,r))\leq\frac{\mu(X)}{4N^2n},\quad\forall x\in X,
        \end{equation} we have a family $\{(A_i,A_i^r)\}$ of mutually disjoint capacitors of $X$ with the desired
property mentioned in the theorem. We claim that when
$r\leq\min\{\rho, \left(\frac{\mu(X)}{4N^2Ln}\right)^{1/p}\}$, the
Inequality (\ref{key}) is automatically satisfied. Indeed,
according to the assumptions we have
\[\mu(B(x,r))\leq Lr^p\leq\min\{L\rho^p,\frac{\mu(X)}{4N^2n}\}\leq\frac{\mu(X)}{4N^2n}.\]
We now consider Lipschitz
 functions $f_i$ supported on $A_i^r$ with
$f_i(x)=1-\frac{d(x,A_i)}r$ on $A_i^r\setminus A_i$, $f_i(x)= 1$
on $A_i$ and zero outside of $A_i^r$. One can easily check that
${\rm dil}_d(f_i)\leq\frac{1}{r}$. Hence, we obtain:
\[{\rm dil}_d(f_i)\leq\frac{1}{\rho}+(4N^2L)^{1/p}\left(\frac{n}{\mu(X)}\right)^{1/p}.\]
This completes the proof.
\end{proof}
%
%
Let $(M,g,\mu)$ be a Riemannian manifold endowed with a finite
non-atomic Borel measure $\mu$. We define the following quantity
that coincides with the eigenvalues of the Laplace-Beltrami
operator when $\mu$ coincides with the Riemannian measure $\mu_g$.
\[\lambda_k(M,g,\mu):=\inf_L\sup\{R(f): f\in L\},\]
where $L$ is a $k$-dimensional vector space of Lipschitz functions
and
$$R(f)=\frac{\int_M|\nabla_gf|^2d\mu}{\int_Mf^2d\mu}$$
 The following corollary is a straightforward  consequence of Theorem \ref{cm3} and it is the key result that we use in the next section.
\begin{cor}\label{jadid} Let $(M,g,\mu)$ be a Riemannian manifold with a finite non-atomic Borel measure $\mu$ {and the distance $d_g$ associated to the Riemannian metric $g$}.
If there exists a measure $\nu$ and a distance $d$ so that
\begin{equation}\label{chap310}d(x,y)\leq d_g(x,y),\quad\forall x,y\in
M;\end{equation}
 \begin{equation}\label{chap320}\nu(A)\leq\mu(A)\quad \text{for all measurable subsets}~A ~\text{of}~
 M,\end{equation}
 and  moreover, there exist positive constants $p,\rho, N$ and $L$ so that $(M,d,\nu)$ satisfies the assumptions of Theorem \ref{cm3},
 then, for every $k\in \N^*$ we have
\begin{eqnarray}\label{ch3inq}
 \lambda_k(M,g,\mu)\leq\frac{16N}{\rho^2}\frac{\mu(M)}{\nu(M)}+
16N(8N^2L)^{2/p}\left(\frac{\mu(M)}{\nu(M)}\right)^{1+2/p}\left(\frac{k}{\mu(M)}\right)^{2/p}.
\end{eqnarray}
\end{cor}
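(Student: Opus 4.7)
The plan is to apply Theorem~\ref{cm3} to the auxiliary $m$-$m$ space $(M,d,\nu)$ in order to produce disjointly supported test functions, and then transport them, via the two comparison hypotheses~\eqref{chap310} and~\eqref{chap320}, into the variational characterization of $\lambda_k(M,g,\mu)$. Since $(M,d,\nu)$ is assumed to satisfy all hypotheses of Theorem~\ref{cm3}, for any chosen $n\in\N^*$ it delivers $n$ mutually disjoint capacitors $\{(A_i,A_i^r)\}_{i=1}^n$ together with Lipschitz functions $f_i\in\T(A_i,A_i^r)$ satisfying $\nu(A_i)\geq \nu(M)/(2Nn)$ and ${\rm dil}_d(f_i)\leq \frac{1}{\rho}+(4N^2L)^{1/p}(n/\nu(M))^{1/p}$.

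Next I would transfer these estimates to the Riemannian data. The hypothesis $d\leq d_g$ gives ${\rm dil}_{d_g}(f_i)\leq {\rm dil}_d(f_i)$, hence $|\nabla_g f_i|(x)\leq {\rm dil}_d(f_i)$ almost everywhere, and so $\int_M|\nabla_g f_i|^2\,d\mu\leq {\rm dil}_d(f_i)^2\,\mu(A_i^r)$. The hypothesis $\nu\leq\mu$ yields $\mu(A_i)\geq\nu(A_i)\geq \nu(M)/(2Nn)$, and since $f_i\equiv 1$ on $A_i$ one has $\int_M f_i^2\,d\mu\geq \nu(M)/(2Nn)$. The sets $A_i^r$ remain disjoint as subsets of $M$ (a purely set-theoretic fact), so the $f_i$ still have pairwise disjoint supports after the transfer.

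The remaining step is to control $\mu(A_i^r)$, for which the crude bound $\mu(A_i^r)\leq \mu(M)$ is wasteful. I would choose $n=2k$ and invoke a pigeonhole argument: since $\sum_{i=1}^{2k}\mu(A_i^r)\leq \mu(M)$, at least $k$ of the indices satisfy $\mu(A_i^r)\leq \mu(M)/k$. Selecting the corresponding $k$ functions produces a $k$-dimensional space of Lipschitz functions on which every Rayleigh quotient is bounded by a constant multiple of $N\,{\rm dil}_d(f_i)^2\,\mu(M)/\nu(M)$. Plugging in the dilatation estimate and using $(a+b)^2\leq 2(a^2+b^2)$ to separate the $1/\rho$ contribution from the $(k/\nu(M))^{1/p}$ contribution gives both summands of~\eqref{ch3inq}; the algebraic identity $(k/\nu(M))^{2/p}\mu(M)/\nu(M)=(k/\mu(M))^{2/p}(\mu(M)/\nu(M))^{1+2/p}$ puts the second summand into the stated form.

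The main obstacle is not a single difficult estimate but the careful bookkeeping of which object lives in which space: the capacitors and the dilatation control are produced in the reduced setting $(M,d,\nu)$, whereas the Rayleigh quotient must be evaluated against $(g,\mu)$. The monotonicity hypotheses~\eqref{chap310} and~\eqref{chap320} are exactly what guarantees that the output of Theorem~\ref{cm3} survives this transfer in the correct direction (the denominator only shrinks, the numerator only grows), and the factor $\mu(M)/\nu(M)$ appearing in~\eqref{ch3inq} is precisely the price paid for the mismatch between the two measures.
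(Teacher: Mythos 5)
Your proposal is correct and follows essentially the same route as the paper's proof: apply Theorem~\ref{cm3} to $(M,d,\nu)$ with $n=2k$, transfer dilatation and measure estimates via \eqref{chap310} and \eqref{chap320}, select by pigeonhole the $k$ indices with $\mu(A_i^r)\leq\mu(M)/k$, and bound the Rayleigh quotients. The only cosmetic difference is that you invoke $(a+b)^2\leq 2(a^2+b^2)$ where the paper uses the looser $(a+b)^2\leq 4(a^2+b^2)$, so your argument actually yields the stated bound with $8N$ in place of $16N$.
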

\begin{proof} Take $(M,d,\nu)$ as an $m-m$ space. According to Theorem \ref{cm3}, for every $2k\in\N^*$ and every $r\leq\min\{\rho,
\left(\frac{\nu(X)}{4N^2Ln}\right)^{1/p}\}$, we have a family of $2k$ mutually
 disjoint capacitors $\{(A_i,A^r_i)\}_{i=1}^{2k}$
 and $2k$ Lipschitz
functions $f_i$  such that for every $1\leq i\leq 2k$,
$\nu(A_i)\geq\frac{\nu({M})}{4 Nk}$ and the following inequality satisfies almost everywhere.
\begin{equation*}
|\nabla_gf_i|\leq{\rm dil}_{d_g}(f_i)\leq{\rm
dil}_{d}(f_i)\leq\frac{1}{\rho}+(4N^2L)^{1/p}\left(\frac{2k}{\nu({M})}\right)^{1/p},
\end{equation*}
where the last inequality comes form Inequality (\ref{3toma}). Since
 $\mu\geq\nu$, one has
\begin{equation}\label{eq2c}
\mu(A_i)\geq\nu(A_i)\geq\frac{\nu({M})}{4 Nk}.
\end{equation}
 Supports of the $f_i$ are disjoint and
${\sum}_{i=1}^{2k}\mu(A_i^r)\leq \mu({M})$; therefore, at least $k$ of them
have measure smaller than $\frac{\mu({M})}{k}$. Up to re-ordering,
we assume that for the first $k$ of the $A_i^r$,  we have
\begin{equation}\label{eq1c}\mu(A_i^r)\leq\frac{\mu({M})}{k}.\end{equation}
Therefore,
\begin{eqnarray*}
 \lambda_k(M,g,\mu)\leq\max_iR(f_i)&\leq& \max_i\left(\frac{1}{\rho}+(4N^2L)^{1/p}\left(\frac{2k}{\nu({M})}\right)^{1/p}
\right)^2\frac{\mu(A_i^r)}{\mu(A_i)}\\
&\leq&16N\left(\frac{1}{\rho^2}+(4N^2L)^{2/p}\left(\frac{2k}{\nu({M})}\right)^{2/p}\right)\frac{\mu({M})}{\nu({M})}.
\end{eqnarray*}
The last inequality comes from applying Inequalities \ref{eq2c} and \ref{eq1c}, together with using the following inequality. \[(a+b)^2\leq4(a^2+b^2)\quad\forall a,b\in \R.\]
In conclusion, we obtain Inequality (\ref{ch3inq}).
\end{proof}

%
%
\section{Eigenvalues of Immersed Submanifolds of $\R^N$}\label{sec3ch2}
%
 In this section, we prove Theorem \ref{a3}.  Let $S$ be an
$m$-dimensional immersed submanifold of $\R^{m+p}$ (with or
without boundary). {We recall that  $G$ is the Grassmannian  of all
 $m$-vector spaces in $\R^{m+p}$ endowed with the $O(m+p)$- invariant
Haar measure with total measure 1. Let $H$ be an $m$-vector space in $G$ and  $i_H(S):=\sup_{P\bot H}\sharp (S\cap P)$, where $P$ runs over 
affine $p$-planes orthogonal to $H$. We define the \textit{mean intersection index}
of $S$ as follows.
\[
 \bar{\imath}(S):=\int_G i_H(S) dH.
\]
Similarly, for every $r>0$, we define the \textit{ $r$-local
intersection index} of $S$ by:
\[
 \bar{\imath}_r(S):=\sup_{x\in S}\int_G i_H(S\cap B(x,r)) dH,
\]
where $B(x,r)\subset \R^{m+p}$ is an Euclidean ball of radius $r$ centered at $x$.\\
{Let $H\in G$ and $\pi_H:S\to H$ be the
orthogonal projection of $S$ on $H$}.
The following lemma is an extension of   \cite[Lemma 2.1]{CDE}.
%
\begin{lemma}\label{lemma}
Let $S$ be an $m$-dimensional immersed submanifold of $\R^{m+p}$,
(not necessarily without boundary). Then there exists $H_0\in G$ such that the following inequality satisfies
\begin{equation}\label{2first}
 \textrm{Vol}(S)\leq C_m\bar{\imath}(S)\rm{Vol}(\pi_{H_0}(S)),
\end{equation}
where $C_m$ is a constant depending only on  $m$.
\end{lemma}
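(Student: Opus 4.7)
The plan is to use the area formula together with an integral-geometric averaging over the Grassmannian $G$. The idea is that for each $H\in G$, the orthogonal projection $\pi_H:S\to H$ has a pointwise Jacobian $|J\pi_H(x)|$ whose average over $G$ is, by rotational invariance, a positive constant $a_m$ depending only on $m$ (and $p$). Averaging the area-formula identity $\int_S|J\pi_H|\,dv_S=\int_H\sharp(\pi_H^{-1}(y)\cap S)\,dy$ will relate $\textrm{Vol}(S)$ to an $i_H$-weighted integral of projected volumes, and maximizing the projected volume at some $H_0$ will decouple the two factors.

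First I would fix $H$ and observe that the fiber $\pi_H^{-1}(y)\cap S$ is exactly $S\cap(y+H^\perp)$, an intersection of $S$ with an affine $p$-plane orthogonal to $H$. Hence $\sharp(\pi_H^{-1}(y)\cap S)\leq i_H(S)$ for $y\in\pi_H(S)$ and vanishes otherwise, so the area formula (applied on the smooth part of the immersion, counting preimages with multiplicity) yields
\[
\int_S|J\pi_H(x)|\,dv_S(x)\leq i_H(S)\,\textrm{Vol}(\pi_H(S)).
\]
Integrating over $G$ against the normalized Haar measure and applying Fubini, the key integral-geometric identity $\int_G|J\pi_H(x)|\,dH=a_m$, independent of $x\in S$, produces
\[
a_m\,\textrm{Vol}(S)\leq\int_G i_H(S)\,\textrm{Vol}(\pi_H(S))\,dH.
\]
The constancy of this Grassmannian integral comes from $O(m+p)$-invariance of the Haar measure: viewing the integrand as a function of $V=T_xS$, the identity $\pi_H\circ g=g\circ\pi_{g^{-1}H}$ for $g\in O(m+p)$ combined with the orthogonality of $g$ gives $|J\pi_H(gV)|=|J\pi_{g^{-1}H}(V)|$, so a change of variable $H\mapsto gH$ shows the integral is $O(m+p)$-invariant in $V$, hence constant on the Grassmannian by transitivity.

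To conclude, I would choose $H_0\in G$ realizing the supremum of $\textrm{Vol}(\pi_H(S))$, which is attained because $G$ is compact and $H\mapsto\textrm{Vol}(\pi_H(S))$ is continuous on the Grassmannian for compact $S$. Factoring $\textrm{Vol}(\pi_{H_0}(S))$ out of the right-hand integral and using $\int_G i_H(S)\,dH=\bar{\imath}(S)$ yields
\[
a_m\,\textrm{Vol}(S)\leq\bar{\imath}(S)\,\textrm{Vol}(\pi_{H_0}(S)),
\]
which is the desired inequality with $C_m:=a_m^{-1}$.

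The main obstacle is carefully establishing the constancy of $\int_G|J\pi_H(x)|\,dH$; although this is a standard integral-geometric fact, it needs a clean justification via $O(m+p)$-invariance as sketched above. A secondary technical point is that $S$ is only immersed and may have boundary, so the area formula must be applied with intersections counted with multiplicity, consistent with the convention built into the definition of $i_H(S)$.
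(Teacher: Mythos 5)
Your proposal is correct and follows essentially the same route as the paper: apply the area formula for $\pi_H$, bound the fiber-counting function by $i_H(S)$, integrate over $G$ using the $O(m+p)$-invariance of the Haar measure to pull out the constant $\int_G|J\pi_H(x)|\,dH$ (the paper's $I(G)$, your $a_m$), and then decouple by choosing a favorable $H_0$. The only minor deviation is that you claim $\sup_H\mathrm{Vol}(\pi_H(S))$ is attained via continuity and compactness (a point that would need a word of justification), whereas the paper sidesteps this by taking any $H_0$ with $2\,\mathrm{Vol}(\pi_{H_0}(S))\geq\sup_H\mathrm{Vol}(\pi_H(S))$, at the cost of an extra factor of $2$ in $C_m$.
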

\begin{proof}
Since for almost all $H\in G$, a  point in $ \pi_H(S)$ has finite
number of  preimages, one can take a generic $H$ and get
\[
\int_{S} \pi_H^* v_H= \int_{S} \vert\theta_H(x)\vert v_{S}\leq
\int_{\pi_H({S})} i_H({S})v_H=i_H({S})\textrm{Vol}(\pi_H({S})),
\]
where $v_{S}$ and $v_H$ are volume elements of $S$ and $H$
respectively and $$\vert\theta_H(x)\vert v_{S}=\pi_H^* v_H.$$
 Now, by integrating over $G$ we get
\begin{eqnarray}\label{nn}
\nonumber\int_{G}i_H({S}) \rm{Vol}(\pi_H({S}))dH & \geq &  \int_{G}dH \int_{S} \vert\theta_H(x)\vert v_{S} \\
\nonumber & = & \int_{S} \left( \int_{G}\vert\theta_H(x)\vert dH\right) v_{S}\\
& = & I(G)\textrm{Vol}({S}),
\end{eqnarray}
where $I(G):=\int_{G}\vert\theta_H(x)\vert dH$. The last equality
comes from the fact that $I(G)$
 does not depend on the
point $x$ (see \cite[page 101]{CDE}). We also have
\begin{eqnarray}
\nonumber \int_{G}i_H({S}) \rm{Vol}(\pi_H(S))dH &\leq&
\sup_H\textrm{Vol}
(\pi_H({S}))\bar{\imath}({S})\label{mm1}\\
\label{mmsec2} &\leq&2\textrm{Vol}(\pi_{H_0}({S})) \bar{\imath}({S}),
\end{eqnarray}
where $H_0$ is an $m$-plane such that
$2\textrm{Vol}(\pi_{H_0}({S}))\geq\sup_H\textrm{Vol}(\pi_H({S}))$.
By Inequalities (\ref{nn}) and {(\ref{mmsec2})}, we get the following
inequality
\[
\textrm{Vol}(\pi_{H_0}({S}))\geq\frac{I(G)\textrm{Vol}({S})}{2\bar{\imath}({S})}.
\]
This proves Inequality (\ref{2first}) with $C_m=\frac{2}{I(G)}$.
 \end{proof}
 Let $M$ be an
 $m$-dimensional immersed submanifold of $\R^{m+p}$.
Throughout the rest of this section,  for every
$\varepsilon\geq0$, $M^D_\varepsilon$ stands for  $M\setminus  D$,
where  $D$ is any open subdomain of $M$ such that $M\setminus D$
{is} a smooth manifold with  smooth boundary and
$\textrm{Vol}(D)\leq\varepsilon\textrm{Vol}(M)$.\\
%
  \begin{cor}\label{3coreu}For all $x\in\R^{m+p}$  and $\varepsilon\geq0$, we have
  \begin{eqnarray}
  \textrm{Vol}\big({M^D_\varepsilon}\cap B(x,s)\big)\leq
 \frac{2\textrm{Vol}(B^m)}{I(G)}\bar{\imath}_r({M^D_\varepsilon})s^m,\quad\forall~0<s\leq
 r\label{an};\\
 \label{3cor}\textrm{Vol}\big({M^D_\varepsilon}\cap B(x,r)\big)\leq \frac{2\textrm{Vol}(B^m)}{I(G)}\bar{\imath}({M^D_\varepsilon})r^m, \quad\forall
 r>0,
  \end{eqnarray}
where $B^m$ is the $m$-dimensional Euclidean unit ball.
\end{cor}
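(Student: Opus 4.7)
The plan is to derive both inequalities as immediate consequences of Lemma \ref{lemma} applied to the truncated submanifold $S = M^D_\varepsilon \cap B(x,t)$, where $t=s$ for \eqref{an} and $t=r$ for \eqref{3cor}. Note that $S$ is an $m$-dimensional immersed submanifold of $\R^{m+p}$, possibly with boundary, so Lemma \ref{lemma} applies and produces an $H_0 \in G$ with
\[
\textrm{Vol}(S) \le \frac{2}{I(G)}\,\bar{\imath}(S)\,\textrm{Vol}(\pi_{H_0}(S)).
\]
The constant $\tfrac{2}{I(G)}$ is exactly the one appearing in the statement, so the whole task reduces to controlling the two factors $\textrm{Vol}(\pi_{H_0}(S))$ and $\bar{\imath}(S)$ in geometric rather than intrinsic terms.

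The volume factor is the easy one: since $S \subset B(x,t)$, its orthogonal projection $\pi_{H_0}(S)$ is contained in $\pi_{H_0}(B(x,t))$, which is an $m$-dimensional Euclidean ball of radius $t$ inside the $m$-plane $H_0$. Hence $\textrm{Vol}(\pi_{H_0}(S)) \le \textrm{Vol}(B^m)\, t^m$, which produces the $s^m$ and $r^m$ factors respectively. For the intersection-index factor one uses the monotonicity $S \subset M^D_\varepsilon \Longrightarrow i_H(S) \le i_H(M^D_\varepsilon)$: for any $H \in G$ and any affine $p$-plane $P \perp H$ one has $\sharp(S\cap P) \le \sharp(M^D_\varepsilon \cap P)$, so after integration over $G$ one gets $\bar{\imath}(S) \le \bar{\imath}(M^D_\varepsilon)$. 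This immediately yields \eqref{3cor}.

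For the localized bound \eqref{an} the same monotonicity gives, when $x \in M^D_\varepsilon$ and $s \le r$, the chain
\[
\bar{\imath}(S) \;=\; \int_G i_H\bigl(M^D_\varepsilon \cap B(x,s)\bigr)\,dH \;\le\; \int_G i_H\bigl(M^D_\varepsilon \cap B(x,r)\bigr)\,dH \;\le\; \bar{\imath}_r(M^D_\varepsilon).
\]
The remaining case $x \notin M^D_\varepsilon$ is handled by observing that either $M^D_\varepsilon \cap B(x,s)$ is empty, in which case \eqref{an} is trivial, or one can pick any point $y$ in the intersection and replace $B(x,s)$ by a slightly larger ball centered at $y \in M^D_\varepsilon$ to fall back into the first case. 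The only step that requires a genuine argument is Lemma \ref{lemma} itself, already proved; everything after that is inclusion-monotonicity and the elementary geometric fact that projections of balls are balls, so no real obstacle remains.
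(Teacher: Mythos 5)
Your proposal follows essentially the same route as the paper: both apply Lemma \ref{lemma} to the truncated piece $S = M^D_\varepsilon\cap B(x,t)$ ($t=s$ or $t=r$), bound the projection volume by $\textrm{Vol}(\pi_{H_0}(S))\le \textrm{Vol}(\pi_{H_0}(B(x,t)))\le \textrm{Vol}(B^m)\,t^m$, and then use inclusion-monotonicity of the mean intersection index. The paper compresses the monotonicity step into the one-line observation that $\bar{\imath}(M^D_\varepsilon\cap B(x,s))\le\bar{\imath}_s(M^D_\varepsilon)\le\bar{\imath}_r(M^D_\varepsilon)\le\bar{\imath}(M^D_\varepsilon)$; you spell it out slightly differently by comparing directly to $B(x,r)$ first, which is equivalent.

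One caution about your final paragraph. You correctly notice that the bound $\int_G i_H(M^D_\varepsilon\cap B(x,r))\,dH\le\bar{\imath}_r(M^D_\varepsilon)$ uses the definition of $\bar{\imath}_r$ as a supremum over centers $x\in M^D_\varepsilon$, so for $x\notin M^D_\varepsilon$ (e.g.\ $x\in D$) something must be said; the paper glosses over this. However, your proposed fix does not work as stated: if $y\in M^D_\varepsilon\cap B(x,s)$ then the smallest ball around $y$ containing $B(x,s)$ has radius up to $2s$, not ``slightly larger'' than $s$. Replacing $B(x,s)$ by $B(y,2s)$ would give a bound in terms of $\bar{\imath}_{2r}(M^D_\varepsilon)$ rather than $\bar{\imath}_r(M^D_\varepsilon)$ (equivalently, would impose $s\le r/2$). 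This is harmless for the downstream application in Theorem \ref{a3}, since it only changes the constants $\alpha_m,\beta_m$ by a dimensional factor, but it is not the free lunch your phrasing suggests. Either accept the $2r$ radius, or observe that in the actual use of Corollary \ref{3coreu} the relevant centers can be taken in $M^D_\varepsilon$.
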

\begin{proof}
Replacing $S$ by ${M^D_\varepsilon}\cap B(x,s)$ in Lemma
{\ref{lemma}}, we obtain
\begin{eqnarray*}\textrm{Vol}\left({M^D_\varepsilon}\cap
B(x,s)\right)&\leq& \frac{2}{I(G)}\bar{\imath}
\left({M^D_\varepsilon}\cap
B(x,s)\right)\textrm{Vol}\left(\pi_{H_0}\left({M^D_\varepsilon}\cap
B(x,s)\right)\right)\\
&\leq&\frac{2\textrm{Vol}(B^m)}{I(G)}\bar{\imath}_s
\left({M^D_\varepsilon}\right) s^m,
\end{eqnarray*}
where $B^m$ is the $m$-dimensional Euclidean unit ball.
The last inequality comes from
$$\textrm{Vol}\left(\pi_{H_0}\left({M^D_\varepsilon}
\cap B(x,s)\right)\right)\leq
\textrm{Vol}\left(\pi_{H_0}\left(B(x,s)\right)\right)\leq\textrm{Vol}(B^m)
s^m$$ Since $\bar{\imath}_s
\left({M^D_\varepsilon}\right)\leq\bar{\imath}_r
\left({M^D_\varepsilon}\right)$ for all $0<s\leq r$ and
$\bar{\imath}_s \left({M^D_\varepsilon}\right)\leq\bar{\imath}
\left({M^D_\varepsilon}\right)$ for all $s>0$, therefore, we
derive Inequalities (\ref{an}) and (\ref{3cor}).
\end{proof}
\begin{rem}\label{rem}
For  $\ep=0$, we have $M^D_\varepsilon=M$. Hence, we have the
Inequalities (\ref{an}) and (\ref{3cor})
 for $M^D_\varepsilon$ replaced by $M$.
\end{rem}
%
%
\begin{proof}[Proof of Theorem \ref{a3}] This theorem is a
straightforward consequence of Corollary \ref{jadid}.  Here, $M$ with the induced metric from $\R^{m+p}$ and the riemannian measure associated to this metric is our metric measure space.  We begin
with giving
 candidates for the {distance} $d$ and the measure $\nu$ appeared in the statement of Corollary \ref{jadid}, such that the assumptions of Corollary \ref{jadid} are
 satisfied. Let $d=d_{_{eu}}$ be the Euclidean distance in $\R^{m+p}$ and
$\nu=\mu_\epsilon^D$ where $\mu_\epsilon^D(A)$ is the Riemannian
volume of $A\cap M^D_\varepsilon$. One can easily check that
$(M,d_{_{eu}})$ has the $(2,N)$-covering property where $N$
depends only on the dimension of the ambient space $\R^{m+p}$.
Moreover, one can consider $N$ as a function depending only on the
dimension $m$ according to the Nash embedding theorem (see \cite[page 106]{CDE}). There also exists $L>0$
such that $\mu_\varepsilon^D(B(x,s))\leq Ls^m$ for $s\leq\rho$.
 We now consider the two following cases:
\begin{itemize} \item Take $\rho=r$. According to Corollary \ref{3coreu}, one can take   $L=
\frac{2\textrm{Vol}(B^m)}{I(G)}\bar{\imath}_r({M^D_\varepsilon})$.
Therefore, Corollary \ref{jadid} implies
\begin{equation}\label{asr3}
\lambda_k(M)\leq\alpha_m\frac{1}{(1-\varepsilon)r^2}+\beta_m\frac{\bar{\imath}_r(M^D_\varepsilon)^{2/m}}{(1-\varepsilon)^{1+2/m}}\left(\frac{k}{{\rm
Vol}(M)} \right)^{2/m}.
\end{equation}
 \item Take $\rho=\infty$. According to Corollary \ref{3coreu}, one can take
$L=\frac{2\textrm{Vol}(B^m)}{I(G)}\bar{\imath}({M^D_\varepsilon}).$
Therefore, Corollary \ref{jadid} implies 
\begin{equation}\label{asr32}
\lambda_k(M)\leq\beta_m\frac{\bar{\imath}(M^D_\varepsilon)^{2/m}}{(1-\varepsilon)^{1+2/m}}\left(\frac{k}{{\rm
Vol}(M)} \right)^{2/m}.
\end{equation}
\end{itemize}
Note that  here we replace  $\nu(M)$ and $\mu(M)$ in  Corollary \ref{jadid} by  $\mu_\epsilon^D(M)$ and ${\rm Vol}(M)$ respectively.
The left hand-sides of Inequalities (\ref{asr3}) and (\ref{asr32})
do not depend on $D$. Hence, taking the infimum  over $D$, we get
Inequalities (\ref{a2}) and (\ref{a1}).
\end{proof}
\section{Eigenvalues of Complex Submanifolds of $\C P^N$}\label{sec4ch2}
In this section, we provide the proof of  Theorem \ref{01}. Before
going into the proof we need to recall the universal inequality
proved by El Soufi, Harrell and
 Ilias which is the key idea of the proof. The following lemma is  a special case of that universal inequality \cite[Theorem 3.1]{EHI} (see also \cite{ChY}):
\begin{lemma}\label{3univ}
Let $M^m$ be a compact complex manifold of complex dimension $m$
and $\phi:M\to\C P^N$ be a holomorphic immersion. Then the
eigenvalues of the Laplace-Beltrami operator on
$(M,\phi^*g_{_{FS}})$ satisfy the following inequality:
\begin{equation}\label{b}
\sum_{i=1}^{k}(\lambda_{k+1}-\lambda_i)^2\leq\frac{2}{m}\sum_{i=1}^{k}(\lambda_{k+1}-\lambda_i)(\lambda_i+c_m),
\end{equation}
where $c_m=2m(m+1)$.
\end{lemma}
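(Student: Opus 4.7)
This statement is a direct specialization of the El Soufi--Harrell--Ilias universal inequality \cite{EHI} applied to the immersion obtained by composing $\phi$ with a canonical isometric embedding of $\C P^N$ into a Euclidean space. My approach has three steps, of which only one requires real work.

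First, I would invoke the EHI bound (\cite[Theorem 3.1]{EHI}) in its Euclidean form: for any isometric immersion $X:(Y^n,g)\to\R^K$ of a closed real $n$-manifold,
$$\sum_{i=1}^k(\lambda_{k+1}-\lambda_i)^2\leq\frac{4}{n}\sum_{i=1}^k(\lambda_{k+1}-\lambda_i)\Bigl(\lambda_i+\tfrac{n^2}{4}\|\mathbf{H}\|_\infty^2\Bigr),$$
where $\mathbf{H}$ is the mean curvature vector of $X$. This is proved by applying the Harrell--Stubbe commutator identity to the coordinate functions of $X$; the correction term is exactly the pointwise squared mean curvature.

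Next, I would compose $\phi$ with the standard moment-map embedding $\iota:\C P^N\hookrightarrow\R^{(N+1)^2-1}$ given by $[z]\mapsto zz^*/|z|^2-\tfrac{1}{N+1}I$, normalized so that $\iota^*g_{\mathrm{eucl}}=g_{FS}$. Its image is a minimal, parallel submanifold of a round sphere. Because $M$ is a complex submanifold of the K\"ahler manifold $\C P^N$ it is automatically minimal in $\C P^N$, so the mean curvature vector of $\iota\circ\phi$ reduces to the trace over $T_xM$ of the second fundamental form of $\iota$ alone. A direct K\"ahler computation (the one underlying the Bourguignon--Li--Yau bound \cite{BLY} and its extension in \cite{AGL}) then shows that this trace has constant squared length depending only on $m$, and with the above normalization it gives precisely $\tfrac{n^2}{4}\|\mathbf{H}\|_\infty^2 = 2m(m+1) = c_m$ for $n=2m$. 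Plugging this into the EHI bound and using $\tfrac{4}{n}=\tfrac{2}{m}$ yields the claimed inequality.

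The main obstacle is the careful bookkeeping of the mean-curvature step: one must fix the scaling of $g_{FS}$ relative to $\iota$ so that $\iota$ is genuinely isometric, and then exploit the $J$-invariance of $T_xM\subset T\C P^N$ (i.e.\ the holomorphicity of $\phi$) to ensure that the second fundamental form of $\iota$ traced over any complex $m$-plane has squared length depending only on $m$. This pointwise constancy, together with the minimality of $M$ in $\C P^N$, is what forces the correction term in EHI to collapse to the universal constant $c_m$; once these normalizations are aligned, the proof is pure substitution.
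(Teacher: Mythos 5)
Your proof is correct and follows essentially the same route as the paper: the paper simply cites \cite[Theorem 3.1]{EHI} (and \cite{ChY}), stating that the lemma is a special case, and your argument -- apply the Euclidean EHI bound to $\iota\circ\phi$ where $\iota$ is the first standard (moment-map) embedding of $\C P^N$, use minimality of complex submanifolds and the constancy of the traced second fundamental form of $\iota$ over complex $m$-planes to get $\frac{n^2}{4}\|\mathbf{H}\|^2_\infty = 2m(m+1)$ with $n=2m$ -- is precisely the computation underlying that citation. The only detail you should nail down explicitly is the normalization: the raw map $[z]\mapsto zz^*/|z|^2$ pulls back the Hilbert--Schmidt metric to $2g_{FS}$, so one must include a factor of $1/\sqrt{2}$ in $\iota$ before computing $\|\mathbf{H}\|^2 = \frac{2(m+1)}{m}$; with that in place the numbers check out (and recover equality for $\C P^m$, where $\lambda_2 = 4(m+1)$).
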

Another useful result is the following recursion formula given by
Cheng and Yang:
\begin{lemma}(\cite[Corollary 2.1]{CY}) If  a positive sequence of numbers
$\mu_1\leq\mu_2\leq\cdots\leq\mu_{k+1}$,  satisfies the following
inequality
 \begin{equation}\label{3cheng0}
\sum_{i=1}^{k}(\mu_{k+1}-\mu_i)^2\leq\frac{4}{n}\sum_{i=1}^{k}\mu_i(\mu_{k+1}-\mu_i),
\end{equation}
then
\begin{equation*}\mu_{k+1}\leq(1+\frac{4}{n})k^{2/n}\mu_1.
\end{equation*}
\end{lemma}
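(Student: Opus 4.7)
The plan is to recast the Cheng--Yang inequality as a quadratic constraint on $\mu_{k+1}$ and then propagate a monotonicity through the sequence of partial sums. Set $\delta = 2/n$, $T_k = \sum_{i=1}^k \mu_i$, and $S_k = \sum_{i=1}^k \mu_i^2$. Expanding hypothesis \eqref{3cheng0} and collecting terms yields
\begin{equation*}
k\mu_{k+1}^2 - 2(1+\delta) T_k\, \mu_{k+1} + (1+2\delta) S_k \leq 0.
\end{equation*}
Since this quadratic in $\mu_{k+1}$ admits a real solution, its discriminant $D_k := (1+\delta)^2 T_k^2 - k(1+2\delta) S_k$ must be nonnegative, and the inequality is equivalent to $\mu_{k+1} \leq \Lambda_k/k$, where $\Lambda_k := (1+\delta) T_k + \sqrt{D_k}$.

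The core of the proof will be to establish the monotonicity
\begin{equation*}
\frac{\Lambda_{k+1}}{(k+1)^{1+2/n}} \leq \frac{\Lambda_k}{k^{1+2/n}},
\end{equation*}
which I would verify by a direct algebraic manipulation. Using $T_{k+1} = T_k + \mu_{k+1}$, $S_{k+1} = S_k + \mu_{k+1}^2$, and the bound $\mu_{k+1} \leq \Lambda_k/k$ in the definition of $\Lambda_{k+1}$, one reduces the claim to a polynomial inequality in $T_k$, $S_k$, $\mu_{k+1}$, and $k$, which can be handled using $D_k \geq 0$ together with the identity $x_- x_+ = (1+2\delta) S_k/k$ relating the two roots of the quadratic. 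For the base case $k=1$ one computes $D_1 = \delta^2 \mu_1^2$, whence $\Lambda_1 = (1+\delta)\mu_1 + \delta\mu_1 = (1+2\delta)\mu_1 = (1+4/n)\mu_1$. Iterating the monotonicity starting from $k=1$ then gives $\Lambda_k \leq (1+4/n)\, k^{1+2/n} \mu_1$, and combining with $\mu_{k+1} \leq \Lambda_k/k$ yields the desired conclusion $\mu_{k+1} \leq (1+4/n)\, k^{2/n} \mu_1$.

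The main obstacle is the monotonicity step. A tempting shortcut is to bound the discriminant by $\sqrt{D_k} \leq \delta T_k$ using the Cauchy--Schwarz inequality $S_k \geq T_k^2/k$; this gives only $\mu_{k+1} \leq (1+2\delta) T_k/k$, and iterating that estimate produces the weaker growth $k^{4/n}$ instead of the sharp $k^{2/n}$. Recovering the correct exponent requires keeping the exact discriminant $D_k$ in play throughout the recursion and exploiting the full algebraic structure of the roots of the quadratic, rather than loosening it at the outset. Once this delicate but purely algebraic step is in hand, the rest of the argument is a one-line telescoping.
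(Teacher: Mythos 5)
The paper does not prove this lemma; it is stated as a direct citation of Cheng and Yang \cite[Corollary 2.1]{CY}, so there is no proof in the paper to compare yours against. Your proposal is therefore a reconstruction of the Cheng--Yang argument, and the framing you set up is indeed the right one: rewriting \eqref{3cheng0} as the quadratic $k\mu_{k+1}^2 - 2(1+\delta)T_k\mu_{k+1} + (1+2\delta)S_k \leq 0$ is correct, the resulting bound $\mu_{k+1}\leq \Lambda_k/k$ with $\Lambda_k=(1+\delta)T_k+\sqrt{D_k}$ is correct, and your base-case computation $\Lambda_1=(1+2\delta)\mu_1$ is correct.

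However, there is a genuine gap at what you yourself call ``the core of the proof,'' namely the monotonicity
\[
\frac{\Lambda_{k+1}}{(k+1)^{1+2/n}} \leq \frac{\Lambda_k}{k^{1+2/n}}.
\]
You assert that this ``reduces to a polynomial inequality \dots\ which can be handled using $D_k\geq 0$ together with the identity $x_-x_+=(1+2\delta)S_k/k$,'' but you never carry out the reduction or the verification. This is not a small omission: $\Lambda_{k+1}$ depends on the free parameter $\mu_{k+1}$ through both $T_{k+1}$ and $S_{k+1}$, and one must show the bound holds for \emph{every} admissible $\mu_{k+1}\in[\mu_k,\Lambda_k/k]$, not just for a convenient choice. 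In Cheng and Yang's paper this inductive step is precisely the technical heart of their Theorem~2.1 and occupies several pages of careful estimation; it is not an immediate consequence of Vi\`ete's formulas. Your remark about why the Cauchy--Schwarz shortcut $\sqrt{D_k}\leq\delta T_k$ only yields $k^{4/n}$ is a good sanity check and shows you understand where the difficulty lies, but recognizing the obstacle is not the same as overcoming it. As written, the proposal is a correct outline of the strategy with the decisive step left unproved.
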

\begin{thm}\label{ch301}
Let $M^m$ be a compact complex manifold of complex dimension $m$
admitting a holomorphic immersion $\phi:M\to\C P^N$. Then for
every
 $k\in\N^*$ we have
\begin{equation}\label{3}\lambda_{k+1}(M,\phi^*g_{_{FS}})\leq
2(m+1)(m+2)k^{\frac{1}{m}}-2m(m+1).
\end{equation}
\end{thm}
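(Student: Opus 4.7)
The plan is to combine the two lemmas by the standard shift-of-spectrum trick. Specifically, I will introduce the shifted quantities
\[
\mu_i := \lambda_i + c_m,\qquad c_m=2m(m+1),
\]
which are strictly positive (since $\lambda_i\ge 0$ and $c_m>0$) and still form a non-decreasing sequence. Because the shift is uniform, the gaps are preserved: $\mu_{k+1}-\mu_i = \lambda_{k+1}-\lambda_i$.

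Next I would rewrite the universal inequality of Lemma \ref{3univ} in terms of the $\mu_i$. Its right-hand side is exactly $\frac{2}{m}\sum_{i=1}^{k}(\mu_{k+1}-\mu_i)\mu_i$, so after substitution I obtain
\[
\sum_{i=1}^{k}(\mu_{k+1}-\mu_i)^2\le \frac{2}{m}\sum_{i=1}^{k}(\mu_{k+1}-\mu_i)\mu_i.
\]
Observing that $\frac{2}{m}=\frac{4}{2m}$, this inequality is precisely of the form \eqref{3cheng0} with $n=2m$. The Cheng--Yang recursion formula therefore applies and yields
\[
\mu_{k+1}\le \left(1+\frac{4}{2m}\right)k^{2/(2m)}\mu_1 = \frac{m+2}{m}\,k^{1/m}\,\mu_1.
\]

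Finally I would plug in $\mu_1 = \lambda_1 + c_m = 0 + 2m(m+1) = 2m(m+1)$ (using $\lambda_1=0$), which gives $\mu_{k+1}\le 2(m+1)(m+2)k^{1/m}$. Subtracting $c_m$ from both sides yields the claimed bound
\[
\lambda_{k+1} \le 2(m+1)(m+2)k^{1/m} - 2m(m+1).
\]

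Since both the universal inequality and the Cheng--Yang recursion are invoked as black boxes, the proof is essentially bookkeeping; there is no real obstacle. The only subtle point worth highlighting is the verification that the constant $c_m=2m(m+1)$ appearing in Lemma \ref{3univ} is precisely what is needed so that, after shifting, the coefficient matches the $\frac{4}{n}$ required by the Cheng--Yang lemma with the right choice $n=2m$ (the real dimension), and that this same shift gives $\mu_1=c_m$, which is what finally produces the two-term bound with the negative constant $-2m(m+1)$ in the statement.
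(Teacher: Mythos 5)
Your proposal follows exactly the same route as the paper's own proof: shift by $c_m=2m(m+1)$, observe that the El Soufi--Harrell--Ilias inequality becomes the Cheng--Yang hypothesis with $n=2m$, apply the recursion, and substitute back using $\lambda_1=0$. It is correct and essentially identical in structure and detail.
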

\begin{proof}[Proof of Theorem \ref{ch301}]According to Lemma \ref{3univ}, the eigenvalues of the Laplace operator on
$M$ satisfy universal Inequality (\ref{b}). We replace $\lambda_i$
by $\mu_i:=\lambda_i+c_m$ in Inequality (\ref{b}) and we obtain:
\begin{equation*}
\sum_{i=1}^{k}(\mu_{k+1}-\mu_i)^2\leq\frac{2}{m}\sum_{i=1}^{k}\mu_i(\mu_{k+1}-\mu_i).
\end{equation*}  One now has a positive
sequence of numbers $\mu_1\leq\mu_2\leq\cdots\leq\mu_{k+1}$ that
satisfies Inequality (\ref{3cheng0})  with $n=2m$. Applying the
recursion formula of Cheng and Yang, we get the following
inequality:
\begin{equation}\label{4}\mu_{k+1}\leq(1+\frac{4}{2m})k^{2/2m}\mu_1.
\end{equation} By replacing $\mu_i$ by $\lambda_i+c_m$ in
Inequality (\ref{4}), we obtain:
\[\lambda_{k+1}(M,\phi^*g_{_{FS}})\leq(1+\frac{2}{m})(\lambda_1(M,\phi^*g_{_{FS}})+c_m)k^{1/m}-c_m.\]
Since $M$ is a compact manifold,
$\lambda_1(M,\phi^*(g_{_{FS}})=0$. Therefore,
\[\lambda_{k+1}(M,\phi^*g_{_{FS}})\leq(1+\frac{2}{m})c_mk^{1/m}-c_m=2(m+1)(m+2)k^{1/m}-2m(m+1),\]
which completes the proof.
\end{proof}
As we mentioned in the introduction, for $k=1$ we get a sharp
upper bound:
\begin{equation}\label{3first}\lambda_2(M,\phi^*g_{_{FS}})\leq\lambda_2(\C
P^m,g_{_{FS}})=4(m+1).\end{equation} In \cite{BLY}, Bourguignon,
Li and Yau obtained  an upper bound for the first non-zero
 eigenvalue of a complex manifold $(M,\omega)$ which admits a \textit{full}\label{chp3full} holomorphic immersion (i.e. $\Phi(M)$ is not  contained in any hyperplane of $\C P^N$)  into $\C P^N$.
\begin{equation}\label{bly}
\lambda_2(M,\omega)\leq4m\frac{N+1}{N}d([\Phi],[\omega]).
\end{equation}
Here, $d([\Phi],[\omega])$ is the \textit{holomorphic immersion
degree} -- a homological invariant -- defined by
\[d([\Phi],[\omega])=\frac{\int_M\Phi^*(\omega_{_{FS}})\wedge\omega^{m-1}}{\int_M\omega^m},\]
where $\omega_{_{FS}}$ is the K\"ahler form of $\C P^N$ with
respect to
the Fubini-Study metric and $\omega$ is K\"ahler form on $M$.\\
If one takes $\omega=\Phi^*(\omega_{_{FS}})$, then
$d([\Phi],[\omega])=1$ and we get Inequality (\ref{3first}) as a
corollary of Inequality (\ref{bly}). Theorem \ref{ch301} gives 
another proof of this sharp inequality. Moreover, it gives upper bounds for higher eigenvalues of complex submanifolds of $\C P^N$ endowed with the Fubini-Study metric.

\end{document}